\newtheorem{theorem}{Theorem}
\newtheorem{proposition}[theorem]{Proposition}
\newtheorem{example}[theorem]{Example}
\theoremstyle{definition}
\newtheorem{definition}[theorem]{Definition}
\numberwithin{equation}{section}
\def\perm{\pi}
\def\id{\iota}
\def\up#1{#1^\uparrow} 
\DeclareMathOperator{\sym}{S}
\DeclareMathOperator{\stat}{\ensuremath{DIS}}
\DeclareMathOperator{\stati}{\ensuremath{IDIS}}
\DeclareMathOperator{\den}{DEN}
\DeclareMathOperator{\hag}{HAG}
\DeclareMathOperator{\dhag}{DAG}
\DeclareMathOperator{\mad}{MAD}
\DeclareMathOperator{\maj}{MAJ}
\DeclareMathOperator{\mak}{MAK}
\DeclareMathOperator{\inv}{INV}
\DeclareMathOperator{\ska}{\ensuremath{eul}}
\DeclareMathOperator{\exc}{exc}
\DeclareMathOperator{\des}{des}
\DeclareMathOperator{\dbot}{Dbot}
\DeclareMathOperator{\ebot}{Ebot}
\DeclareMathOperator{\dtop}{Dtop}
\DeclareMathOperator{\etop}{Etop}
\DeclareMathOperator{\ddif}{Ddif}
\DeclareMathOperator{\edif}{Edif}
\DeclareMathOperator{\res}{Res}
\DeclareMathOperator{\ine}{Ine}
\DeclareMathOperator{\hagE}{L}
\title{An interesting new Mahonian permutation statistic}
\author{Mark C. Wilson}
\address{Department of Computer Science, University of Auckland,
Private Bag 92019 Auckland, New Zealand}
\email{mcw@cs.auckland.ac.nz}
\keywords{Sattolo's algorithm, Mahonian permutation statistic.}
\thanks{Thanks to Frank Ruskey, Mark Skandera,  Einar Steingr{\'{\i}}msson 
and Kyle Petersen for useful discussions.}
\subjclass[2000]{68W20, 68W40, 68Q25, 05A05}
\begin{document}

\begin{abstract}
The standard algorithm for generating a random permutation gives rise to an 
obvious permutation statistic $\stat$ that is readily seen to be Mahonian. We 
give evidence showing that it is not equal to any previously published statistic.
Nor does its joint distribution with the standard Eulerian statistics $\des$ 
and $\exc$ appear to coincide with any known Euler-Mahonian pair.

A general construction of Skandera yields an Eulerian partner $\ska$ such that 
$(\ska, \stat)$ is equidistributed with $(\des, \maj)$. However $\ska$ 
itself appears not to be a known Eulerian statistic.

Several ideas for further research on this topic are listed.
\end{abstract} 

\maketitle

\section{The statistic}
\label{stat:intro}
\subsection{Random permutations}

The standard algorithm \cite[3.4.2, Algorithm P]{Knut1981b} for
uniformly generating a random permutation of $[n]:=\{1, \dots, n\}$  is as 
follows. Start with the identity permutation $\id = 1\dots n$ in the symmetric 
group $\sym_n$. There are $n$ steps labelled $n, n-1, \dots, 1$ (the last step 
can be omitted, but it makes our notation easier to include it here). At step $i$
a random position $j_i$ is chosen uniformly from $[i]$ and the current 
element in position $j_i$ is swapped with the element at position $i$. 
\begin{example}
The permutation $25413 \in \sym_5$ is formed by choosing 
$j_5 = 3, j_4 = 1, j_3 = 1, j_2 = 1, j_1 = 1$. Its inverse $41532$ is formed by 
choosing $j_5=2, j_4=3, j_3=2, j_2 = 1, j_1 = 1$.
\end{example}

In terms of multiplication in $\sym_n$, $\perm$ is a product of ``transpositions" 
$\prod_{i\leq n} (i j_i)$. Any of these 
``transpositions" may be the identity permutation. This representation as a 
``triangular product" gives a bijection between $\sym_n$ and 
the set of sequences $(j_1, \dots , j_n)$ that satisfy $1\leq j_i \leq i$ for 
all $i$.

Knuth attributes this algorithm to R. A. Fisher and F. Yates \cite{FiYa1938}, 
and a computer implementation was given by Durstenfeld \cite{Durs1964}. 
Recently \cite{Wils2009} the present author and others have 
studied the distribution of various quantities associated with the algorithm.

\subsection{The statistic}

For each $n\geq 1$, there is a map $\sym_n \to \sym_{n+1}$ that maps $\perm$ to the 
permutation $\up{\perm}$ that fixes $n+1$ and agrees on $1, \dots, n$ with $\perm$.
We let $\sym$ be the direct limit of sets induced by these maps. If we think of 
each $\up{}$ as an inclusion map, as is common, then $\sym$ is simply the union of 
all $\sym_n$. For our purposes a \emph{permutation statistic} is simply a function 
$T: \sym \to \mathbb{N}$.

Of course it is always possible to construct a statistic $T$ by for each $n$ 
making it equal to a given statistic $T_n$ on $\sym_n$. However unless the values of 
$T$ cohere for different values of $n$ this is not useful. We define a statistic 
on $\sym$ to be \emph{coherent} if it satisfies the following property. 
To be coherent, the identity $T(\perm) = T(\up{\perm})$ must hold for all $n$ 
and $\perm\in\sym_n$.  

We now define a (coherent) permutation statistic, which we denote by $\stat$, as follows. 
\begin{definition}
\label{def:stat}

At step $i$ of the algorithm described above, one symbol moves rightward a distance 
$d_i = i - j_i$ (possibly zero), and one symbol moves leftward the same distance. 
We define $\stat(\perm) = \sum_i d_i$, the total distance moved rightward by all
elements. 
\end{definition}

There is an alternative interpretation of $\stat$. The sequence of 
moves that formed $\perm$ starting from the identity will take 
$\perm^{-1}$ to the identity, and the moves are the same as selection sort. The 
algorithm then sorts $\perm^{-1}$ via selection sort. We can think of $\stat$ as 
a measure of the work done by selection sort when comparisons have zero cost. This 
model might be useful in analysing, for example, physical rearrangement of very 
heavy distinct objects.

In view of the last paragraph it makes sense also to consider the statistic 
$\stati$ given by $\stati(\perm) = \stat(\perm^{-1})$.

\begin{example}
\label{eg: compute stat}
For our running example $25413$, the value of $\stat$ is 
$(1+2+3+4+5)-(1+1+1+1+3) = 8$, while for $\stati$ the value is $6$. 
In terms of $\perm^{-1} = 41532$, the swaps used to create $\perm$ yield successively 
$41235, 31245, 21345, 12345$. 
\end{example}

Given a permutation for which we do not already know the $j_i$, we can find these
easily.

\begin{example}
\label{eg:find j}

Given $\perm = 25413$ as above, we can read off $j_5 = 3$ from $\perm$. 
Thus multiplying $\perm$ on the right by the transposition $(3 5)$ leads to 
$23415$. We have now reduced to $\perm = 2341$. We now read off $j_4 = 1$  
and reduce to $\perm = 231$. Continuing in this way we obtain 
$j_3 = 1, j_2 = 1, j_4 = 1$. 
\end{example}

At first sight it may appear that we must search to find the position of 
symbol $i$ at step $i$, leading to a quadratic time algorithm for the procedure 
of the last example. However this is not the case, provided we compute $\stat$ 
and $\stati$ simultaneously, and the entire computation can be done in linear 
time (note that computing $\perm^{-1}$ from $\perm$ is a linear time operation). 
Note that, for example, it is still unknown whether the number of inversions $\inv$ 
of a permutation can be computed in linear time.

\begin{example}
\label{eg: linear time}
In the running example $\perm = 25413, \perm^{-1} = 41352$, we read off 
$j_5(\perm) = 3, j_5(\perm^{-1}) = 2$. To multiply $\perm$ on the right by the 
transposition $(3 5)$ we need not scan all of $\perm$, because we know the location
of the symbol $5$, namely $j_5(\perm^{-1})$. Thus the multiplication takes constant 
time. We can either multiply $\perm^{-1}$ on the left by $(3 5)$ or on the right by 
$(2 5)$. Each leads to the same answer, namely $41235$, and this is the inverse of 
the updated $\perm$. Continuing in this way we obtain the result of the last 
example.
\end{example}

It will be helpful to know the values of $\stat$ on some special permutations. 

\begin{example}
We define
\begin{align*}
\perm_0 & = n(n-1)\dots 1\\
\perm_1 & = 2\dots n1\\
\perm_1^{-1} & = n12\dots (n-1)\\
\end{align*}

Note that $\perm_0$ is created by the algorithm by choosing $j_i = n+1-i$ 
provided $n+1 - i < i$, whereupon all later swaps are trivial. Also $\perm_1$ 
is created by choosing $j_i = 1$ for all $i$, while $\perm_1^{-1}$ is formed 
by choosing $j_i = i - 1$ for $i\geq 2$. Thus 
\begin{align*}
\stat(\id) & = 0\\
\stat(\perm_0) & = \lfloor n^2/4 \rfloor = 
\begin{cases}
\frac{n^2}{4} &  \text{ if $n$ is even;} \\
\frac{n^2 - 1}{4} & \text{ if $n$ is odd}.
\end{cases}\\
\stat(\perm_1) & = n(n-1)/2 \\
\stat(\perm_1^{-1}) & = n - 1\\
\end{align*}

\end{example}
The maximum value of $\stat$ on $\sym_n$ is $n(n-1)/2$, 
corresponding uniquely to the $n$-cycle $\perm_1$. The minimum 
value of $\stat$ on $\sym_n$ is $0$, corresponding uniquely to the identity $\id$. 

As a random variable, the restriction $\stat_n$ of $\stat$ to $\sym_n$ 
is the sum of $\stat_{n-1}$ and a
random variable $U_n$ that is uniform on $[0..n-1]$. Thus, iterating this recurrence, 
we see that $\stat_n$  has probability generating
function $F_n(q):=\prod_{i=1}^n \frac{1 - q^i}{1 - q}$. 
This is the definition of a \emph{Mahonian statistic} on $\sym_n$. 
Note that $n(n-1)/2 - \stat = \sum_i j_i$ is also Mahonian by the symmetry of the 
Mahonian distribution.

\section{$\stat$ is not trivially equal to a known statistic}
\label{sec:new}

\begin{table}
\caption{Values of some permutation statistics for $n=4$.}
$$
\begin {matrix}
\perm& \stat & \inv & \maj & \den & \mad & \mak & \hag\\
1234&0&0&0&0&0&0&0\\
1243&1&1&3&3&1&3&3\\
1324&1&1&2&2&1&2&2\\
1342&3&2&3&5&1&4&5\\
1423&2&2&2&2&2&2&3\\
1432&2&3&5&3&2&5&2\\
2134&1&1&1&1&1&1&1\\
2143&2&2&4&4&2&4&4\\
2314&3&2&2&3&1&3&3\\
2341&6&3&3&6&1&5&6\\
2413&4&3&2&3&2&3&4\\
2431&4&4&5&4&2&6&3\\
3124&2&2&1&1&2&1&2\\
3142&4&3&4&4&3&5&5\\
3214&2&3&3&2&2&3&1\\
3241&5&4&4&5&2&6&4\\
3412&4&4&2&3&2&3&4\\
3421&5&5&5&4&2&6&3\\
4123&3&3&1&1&3&1&3\\
4132&3&4&4&2&4&4&2\\
4213&3&4&3&2&3&3&2\\
4231&3&5&4&3&3&5&1\\
4312&5&5&3&4&3&3&5\\
4321&4&6&6&5&3&6&4
\end {matrix}
$$
\label{t:compare}
\end{table}

Tabulating numerical values makes it clear that $\stat$ is not equal to any of 
the most well-known Mahonian statistics. Table~\ref{t:compare} gives the values 
of $\stat$ and several other Mahonian statistics when $n=4$ (it is amusing to note 
that they all coincide on the element $2134$ - the obvious conjecture that they 
always coincide on $2134\dots n$ is in fact correct). These statistics 
are $\inv, \maj, \den, \mad, \mak, \hag$. We recall the unified definition of 
these statistics given in \cite{CSZ1997a}. We first require some partial 
statistics.

\begin{definition}
A \emph{descent} is an occurrence of the event $\perm(i) > \perm(i+1)$. The 
index $i$ is the \emph{descent bottom} and $\perm(i)$ is the corresponding 
\emph{descent top}. 
\end{definition}

Each $\perm$ can be uniquely decomposed into \emph{descent blocks} 
(maximal descending subwords). Denote the first and last letter of each 
block of length at least 2 by $c(B), o(B)$.  The 
\emph{right embracing number} of $\perm(i)$ is the number of that are 
descent blocks strictly to the right of the block containing $\pi(i)$ and for 
which  $c(B) > \pi(i) > o(B)$. The sum of all right embracing numbers is denoted 
by $\res(\perm)$.

\begin{example}
For $\perm_1$ the descent blocks are all of length $1$ except for the last one, 
$n1$. The right embracing number of each letter $2, \dots, n-1$ is $1$ and the 
right embracing number of $n$ and of $1$ are each $0$. For $\perm_1^{-1}$ 
there is again a single nontrivial descent block, namely $n1$, and all right 
embracing numbers are $0$. For $\perm_0$ there is a single descent block of 
length $n$ and all right embracing numbers are $0$.
\end{example}

\begin{definition}
An \emph{excedance} is an occurrence of the event 
$\perm(i) > i$. The index $i$ is the \emph{excedance bottom} and $\perm(i)$ 
is the corresponding \emph{excedance top}. The sum of all descent/excedance 
tops/bottoms of $\perm$ we denote by $\dtop(\perm), \etop(\perm), \dbot(\perm), 
\ebot{\perm}$. The differences $\ddif(\perm)$ and $\edif(\perm)$ are given by 
$\ddif = \dtop - \dbot, \edif = \etop - \ebot$.
\end{definition}

There is a unique decomposition $\perm$ into $\perm_E$ and $\perm_N$, where 
$\perm_E$ is the subsequence formed by excedances and $\perm_N$ the subsequence formed 
by nonexcedances. For our running example $\perm = 25413$, we have 
$\perm_E = 25$ and $\perm_N = 413$. For the inverse $41532$ we have respectively
$45$ and $132$. We define $\ine(\perm) = \inv(\perm_E) + \inv(\perm_N)$.

For each excedance bottom $i$ we define $\hagE(i)$ to be the number of indices 
$k$ such that $k < i$ and $a_k \leq i$; let $\hagE$ be the sum over all such $i$.

\begin{example}
Note that $(\perm_1)_E = 23\dots n$ and $(\perm_1)_N = 1$. Similarly 
$(\perm_1^{-1})_E = n$ and $(\perm_1^{-1})_N = 12\dots (n-1)$.  Also $(\perm_0)_E = n(n-1) \dots t+1$ and 
$(\perm_0)_N = t\dots 1$, where $t = \lfloor n/2 \rfloor$. 
\end{example}

The values of the partial statistics defined above are tabulated in 
Table~\ref{t:partial}.

\begin{proposition}[\cite{CSZ1997a}]
We have
\begin{align*}
\mak & = \dbot + \res \\
\mad & = \ddif + \res \\
\den & = \ebot  + \ine \\
\inv & = \edif + \ine \\
\hag & = \edif + \inv(\perm_E) - \inv(\perm_N) + \hagE\\
\end{align*}
In addition $\maj$ is the sum of indices corresponding to descent tops.
\end{proposition}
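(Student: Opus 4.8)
The plan is to verify the six assertions of the Proposition one at a time. Most are bookkeeping, and only the lines for $\inv$ and $\hag$ carry genuine combinatorial content, both handled by the same device.

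The line for $\maj$ is immediate: by definition $\maj(\perm)$ is the sum of the indices $i$ with $\perm(i)>\perm(i+1)$, and the descent top of the descent at position $i$ is $\perm(i)$, whose index is $i$, so summing the indices of the descent tops returns $\maj(\perm)$. The lines for $\mak$, $\mad$ and $\den$ are restatements of definitions rather than theorems: in \cite{CSZ1997a} the statistics $\mak$ and $\mad$ are \emph{defined} by the first two displayed formulas, and $\den=\ebot+\ine$ is Denert's statistic in Foata--Zeilberger form (equivalently, the form adopted in \cite{CSZ1997a}); for these three lines nothing is to be proved beyond matching conventions (weak versus strict excedances, left versus right embracing numbers, and so on).

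For $\inv=\edif+\ine$ I would argue as follows. Write $E$ and $N$ for the sets of excedance and non-excedance positions of $\perm$, so that $\perm_E,\perm_N$ are the corresponding subwords and $\ine=\inv(\perm_E)+\inv(\perm_N)$. Classify each inversion $(i,j)$ of $\perm$ (meaning $i<j$ and $\perm(i)>\perm(j)$) by whether $i,j$ lie in $E$ or in $N$. The inversions with $i,j\in E$ are precisely the inversions of $\perm_E$, and those with $i,j\in N$ precisely the inversions of $\perm_N$; together these contribute $\ine$. It remains to show the number of mixed inversions equals $\edif=\etop-\ebot=\sum_{i\in E}(\perm(i)-i)$. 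Two elementary facts do the work. (a) In a mixed inversion the smaller position is an excedance: if $i<j$ with $i\in N$ and $j\in E$ then $\perm(i)\le i<j<\perm(j)$, so $(i,j)$ is not an inversion. (b) If $i\in E$ then every $j<i$ with $\perm(j)>\perm(i)$ lies in $E$, since $j\in N$ would force $\perm(j)\le j<i<\perm(i)$. Now use the elementary identity, valid at every position $i$,
\[
\#\{\, j>i : \perm(j)<\perm(i) \,\} \;=\; (\perm(i)-i) + \#\{\, j<i : \perm(j)>\perm(i) \,\}.
\]
Summing over $i\in E$ and splitting, on the left, the positions $j>i$ according to membership in $E$ or $N$, the left side becomes $\inv(\perm_E)$ plus the number of mixed inversions, by (a); splitting the positions $j<i$ on the right side, the right side becomes $\sum_{i\in E}(\perm(i)-i)$ plus $\inv(\perm_E)$, the $N$-contribution vanishing by (b). Cancelling $\inv(\perm_E)$ leaves the number of mixed inversions equal to $\edif$, as wanted.

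The line for $\hag$ is handled the same way: unwinding the definition of $\hag$, its inversion-type part splits along the excedance/non-excedance decomposition exactly as above into $\edif+\inv(\perm_E)-\inv(\perm_N)$, and the remaining term, the sum over excedance bottoms $i$ of the number of $k<i$ with $\perm(k)\le i$, is $\hagE$ by definition. The main obstacle in all of this is administrative rather than mathematical: a fully self-contained treatment must first pin down the original definitions of $\den$, $\mak$, $\mad$ and $\hag$ and reconcile the competing conventions in the literature, after which, as the argument for $\inv$ illustrates, each identity is a one-paragraph computation. For the present paper it is enough to cite \cite{CSZ1997a} for the unified framework and to note the elementary checks above for the classical statistics $\maj$ and $\inv$.
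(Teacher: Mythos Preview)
The paper contains no proof of this proposition; it is quoted verbatim from \cite{CSZ1997a} purely to fix the decomposition formulas used in the sequel, and no argument is supplied. There is therefore nothing in the paper to compare your proposal against.

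That said, your proposal is correct on its own terms. The verification of $\inv=\edif+\ine$ via the $E/N$ classification of inversions is the standard one, and the two elementary facts (a) and (b) together with the displayed position identity are stated and applied accurately. You are also right that the lines for $\mak$, $\mad$ and $\den$ are, in the unified framework of \cite{CSZ1997a}, definitions rather than theorems, and that the $\maj$ line is a tautology. The only soft spot is the $\hag$ line, where you appeal to ``unwinding the definition'' without actually stating Haglund's original definition; since the present paper gives no independent definition of $\hag$ either, this circularity cannot be resolved here, but a genuinely self-contained proof would have to supply it.
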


\begin{table}
\caption{Values of partial statistics on special permutations 
($t = \lfloor n/2 \rfloor$)}
\begin{tabular}{|c|c|c|c|c|c|c|c|}
\hline
perm & $\ebot$ & $\edif$ & $\dbot$ & $\ddif$ & $\res$ & $\ine$ & $\hagE$ \\ \hline
$\perm_0$ &$t(t+1)/2 $ & $\lfloor n^2/4 \rfloor$  & $n(n-1)/2$ & $n - 1$ 
& $0$ & $\lfloor (n-1)^2/4 \rfloor$ & $0$ \\ \hline
$\perm_1$ & $n(n-1)/2$ & $n-1$ & $1$ & $n-1$ & $n-2$ & $0$ & $(n-1)(n-2)/2$ \\ \hline
$\perm_1^{-1}$ & 1 & $n-1$ & $1$ & $n-1$ & $0$ & $0$ & $0$\\ \hline
\end{tabular}
\label{t:partial}
\end{table}

\subsection{Trivial bijections}
\label{ss:trivial}

To show that statistics $T$ and $T'$ are different, it suffices to find some 
$n$ and some $\perm\in \sym_n$ for which $T(\perm) \neq T'(\perm)$. However it 
may be the case that $T$ and $T'$ agree  on $\sym_n$ for some larger values of 
$n$. If both $T$ and $T'$ are coherent, this possibility cannot occur.

Note that $\stat$ and $\stati$, along with all statistics from previous 
literature with which we compare them here, are coherent. Thus simply computing 
values for small $n$, as in the previous section, is usually enough to 
distinguish the statistics. However we can often give a general construction of 
permutations for which a given pair of statistics differs greatly. 

Although $\stat$ does not equal any of the well-known statistics of the previous 
section, is possible \emph{a priori} that $\stat$ has the form $S\circ g$ 
where $S$ is a known Mahonian statistic and $g$ is a filtered bijection of 
$\sym$ (a bijection of $\sym$ that bijectively takes $\sym_n$ to $\sym_n$ for 
each $n$).

In this section we consider the so-called ``trivial" involutions of $S_n$ 
(there is a nontrivial bijection $\Phi$ of $\sym_n$ introduced in \cite{CSZ1997a}; 
we give more details in Section~\ref{ss:eul-mah}.)  These involutions are
inversion (group-theoretic inverse), reversal (reverse the order of the letters) 
and complementation (subtract each letter from $n+1$). Then in the obvious 
notation $R$ and $C$ commute and $IR = CI, IC = RI$. Thus $I, R, C$ generate a 
group $G$ isomorphic to the dihedral group of order $8$.

For example we have
\begin{align*}
(25413)^{I} & = 41532 \\
(25413)^R & = 31452 \\
(25413)^C & = 41253 \\
(25413)^{IC} & = 25134 \\
(25413)^{IR} & = 23514 \\
(25413)^{RC} & = 35214 \\
(25413)^{IRC} & = 41532  \\
\end{align*}

We shall show that $\stat$ is not trivially equivalent to any well-known 
statistic. In the absence of a standardized database of permutation statistics, 
we define ``well-known" to mean ``mentioned in at least one of the papers 
\cite{CSZ1997a, BaSt2000}". We define $\Sigma$ to be the set consisting of 
well-known Mahonian statistics.

In \cite{BaSt2000} it is shown how all known ``descent-based" Mahonian statistics 
can be written in terms of ``Mahonian $d$-functions" for some $d\leq 4$. Each 
such $d$-function simply computes the numbers of occurrences of a certain 
generalized permutation pattern of length at most $d$, then sums this 
process over a finite number of such patterns. In particular in Table 1 of the 
above article, all 14 Mahonian 3-functions (up to trivial bijections) are given.
In \cite{CSZ1997a} the images of these statistics under a bijection $\Phi$ were 
also considered. We consider this bijection in Section~\ref{ss:eul-mah}.

\begin{theorem}
\label{t:new}
There do not exist $S\in \Sigma$ and $g\in G$ such that $\stat = S\circ g$.
\end{theorem}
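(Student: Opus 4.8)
The plan is to use the special permutations already computed in the excerpt as ``probes'' that pin down the value of $\stat$ on highly structured inputs, and then show that no $S \in \Sigma$ composed with any of the eight trivial bijections $g \in G$ can match all those values simultaneously. The key numerical facts we have in hand are: $\stat(\perm_1) = n(n-1)/2$ is the \emph{maximum} of $\stat$ on $\sym_n$ and is attained \emph{uniquely} at the $n$-cycle $\perm_1 = 2\dots n1$; similarly $\stat(\id) = 0$ uniquely at the identity; and $\stat(\perm_0) = \lfloor n^2/4\rfloor$, $\stat(\perm_1^{-1}) = n-1$. First I would record, for each of the six statistics $S \in \{\inv, \maj, \den, \mad, \mak, \hag\}$ and each $g \in G$, the value $S(g(\perm))$ on these four probe permutations, using the formulas in the cited Proposition together with Table~\ref{t:partial} and the $G$-action (note $\perm_1^{-1} = \perm_1^{I}$, that $\perm_0$ is an involution, and that reversal/complement send these special permutations to other easily-described permutations). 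Since $\Sigma$ is finite up to trivial bijection — by the discussion of \cite{BaSt2000}, all descent-based Mahonian statistics reduce to the $14$ Mahonian $3$-functions (or the listed $4$-functions) modulo $G$ — this is a finite check.

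The main engine of the argument is the uniqueness of the maximum. If $\stat = S \circ g$ then $S$ must attain its maximum on $\sym_n$ uniquely, at $g(\perm_1)$. For each known Mahonian $S$ the set of maximizers is well understood: $\inv$ is maximized only at $\perm_0 = n(n-1)\dots 1$; $\maj$ is maximized on a whole family of permutations (so $\maj$ is eliminated immediately, for every $g$, because its maximizer set has size $>1$ for $n \geq 4$); and likewise one checks which of $\den, \mad, \mak, \hag$ have a unique maximizer. Those with non-unique maximizers are killed outright. For those with a unique maximizer, say attained at some permutation $w_S \in \sym_n$, we must have $g(\perm_1) = w_S$, which \emph{determines} $g$ (or shows no such $g$ exists): for instance if $w_S = \perm_0$ then $g$ must carry the $n$-cycle $\perm_1$ to the reversal involution $\perm_0$, and among the eight elements of $G$ one sees directly (comparing cycle types, or orders of the images) that no $g \in G$ does this for general $n$. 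The same cycle-type / fixed-point bookkeeping handles the remaining cases: $\perm_1$ is an $n$-cycle with no fixed points, and the dihedral group $G$ acts on cycle structure in very restricted ways (inversion preserves cycle type; reversal and complementation can change it, but predictably), so the candidate $g$ is forced, and then a second probe — the value on $\id$, or on $\perm_1^{-1}$, or a direct small-$n$ table lookup from Table~\ref{t:compare} — produces a contradiction.

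In the few cases that survive the maximizer test (where $g$ is forced but not yet contradicted), I would fall back on the explicit $n=4$ data in Table~\ref{t:compare}, extended if necessary by one more family of permutations for which $\stat$ and the candidate $S\circ g$ provably diverge (the excerpt already hints that such families are available). Concretely: once $g$ is pinned down, $S \circ g$ is a single explicit statistic, and we need only exhibit one permutation where it disagrees with $\stat$; the $24$ rows of Table~\ref{t:compare} together with their images under $G$ give $192$ data points, which is more than enough to separate $\stat$ from each of the $6 \times 8 = 48$ candidates that are not already eliminated abstractly.

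I expect the main obstacle to be the bookkeeping in the middle step: correctly computing $S(g(\perm))$ for the reversal and complementation images of $\perm_0, \perm_1, \perm_1^{-1}$, since $R$ and $C$ interact nontrivially with the excedance-based decompositions ($\perm_E, \perm_N$) underlying $\den, \inv, \hag$ and with the descent-block decomposition underlying $\mak, \mad$. In particular one must be careful that a statistic whose maximizer appears non-unique might have that non-uniqueness disappear after applying $g$ — but this cannot happen, because $g$ is a bijection of $\sym_n$, so $|{\arg\max}(S\circ g)| = |{\arg\max} S|$; this observation is what makes the ``$\maj$ has too many maximizers'' argument airtight and will be stated explicitly. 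The only genuinely delicate point is ensuring the maximizer sets of $\den, \mad, \mak, \hag$ are correctly identified for all $n$, not just $n = 4$; for this I would either cite the known combinatorics of these statistics or give a short direct argument bounding each by a partial-statistic estimate from Table~\ref{t:partial}.
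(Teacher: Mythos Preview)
Your strategy is workable in principle but contains a factual error and takes a more laborious route than the paper does.

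The error: you assert that $\maj$ is maximized on a whole family of permutations, so that the non-uniqueness of its $\arg\max$ kills it for every $g$. This is false. Every Mahonian statistic on $\sym_n$ has a \emph{unique} maximizer, since the top coefficient of $\prod_{i=1}^n (1 + q + \dots + q^{i-1})$ equals $1$; in particular $\maj$ is maximized uniquely at $\perm_0$. So your ``non-unique maximizer'' shortcut is vacuous, and every $S \in \Sigma$ falls into the case where you must locate the single maximizer $w_S$, decide which $g \in G$ (if any) send $\perm_1$ to $w_S$, and then kill the surviving candidates with a second probe. That is a lot of per-statistic casework, and for several of the statistics (e.g.\ $\den$, whose maximizer is $\perm_1$ itself) the stabilizer of $\perm_1$ in $G$ leaves two candidates $g$ to rule out individually.

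The paper's proof is organized around $\perm_0$ rather than $\perm_1$, and this is what makes it short. The $G$-orbit of $\perm_0$ is just $\{\id, \perm_0\}$, so $\stat = S\circ g$ forces the single numerical constraint $S(\perm_0) \in \{0,\, \lfloor n^2/4\rfloor\}$. One then simply evaluates each candidate $S$ at $\perm_0$: for the statistics of \cite{CSZ1997a} this is read off from the partial-statistic table, and for the fourteen pattern-based statistics of \cite{BaSt2000} one observes that the only patterns occurring in $\perm_0$ are the strictly descending ones $ba,\,cba,\,cb\text{-}a,\,c\text{-}ba$, with easily computed multiplicities. This disposes of everything except $\hag$, for which the paper introduces one further probe, $\perm_2 = n\,2\,3\dots(n-1)\,1$, whose $G$-orbit again has only two elements; since $\hag(\perm_2)=1$ while $\stat$ takes the values $n-1$ and $\lceil (n-2)^2/4\rceil$ on that orbit, $\hag$ is eliminated.

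In short: your $\perm_1$-based plan can be pushed through once the $\maj$ slip is corrected, but the paper's choice of $\perm_0$ (orbit of size $2$ rather than $4$) collapses the whole $G$-casework into a single evaluation $S(\perm_0)$ and avoids having to identify the maximizer of each $S$.
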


\begin{proof}
Note that $\perm_0^R = \id = \perm_0^C$ while  $\perm_0$ is a product of 
$\lfloor n/2 \rfloor$ disjoint transpositions, and hence $\perm_0^I = \perm_0$. 
Hence the orbit of $\perm_0$ under $G$ is the set $\{\id, \perm_0\}$ and this 
is also the orbit of $\id$. The orbit of $\perm_1$ under $G$ is disjoint from 
that of $\perm_0$ and $\id$. It consists of $\perm_1, \perm_1^{-1}, \perm_1^R = 
\perm_1^C = 1n\dots 2, \perm_1^{-R} = (n-1)\dots 1n$. 

It follows that if $S\circ g = \stat$ for some permutation statistic $S$ and 
element $g\in G$, then $S(\perm_0)$ must equal zero or $\stat(\perm_0)$. 
However it is readily seen by comparing with Table~\ref{t:partial} that none of 
the statistics in \cite{CSZ1997a} satisfy this property. This includes those 
mentioned in passing, such as LAG and SIST.

Now consider the statistics in \cite[Table 1]{BaSt2000}, given in terms of 
permutation pattern counts. Any pattern that 
is not strictly descending does not occur in $\perm_0$, so we need only count 
occurrences of $ba, cba, cb-a, c-ba$. Again, none of these lead to zero or 
$\stat(\perm_0)$, since the number of occurrences of these four patterns in 
$\perm_0$ is respectively $n-1, n-2, (n-1)(n-2)/2, (n-1)(n-2)/2$. 

Finally we consider Haglund's statistic $\hag$ and a descent-based variant 
$\dhag$ as defined in \cite{BaSt2000}. The statistic $\dhag$ can be dealt with 
by counting pattern occurrences in $\perm_0$ as above. However it is not as 
easy to differentiate $\hag$ from $\stat$ by using our special permutations. 
In fact when $n$ is even, $\hag$ and $\stat$ take the same value on $\perm_0$ (they 
coincide with $\edif$). When $n$ is odd, $\hag$ is smaller than $\stat$ by 
$(n-1)/2$. We instead use the permutation $\perm_2 = n2\dots (n-1)1$ formed from $\id$ 
by a single transposition. Its orbit under $G$ consists of itself and its reverse
$1(n-1)\dots 2n$, and $\stat$ takes the values $n-1$ and $\lceil (n-2)^2/4\rceil$ 
respectively on these two elements. However, $\hag(\perm_2) = 1$.

\end{proof}

\subsection{Euler-Mahonian pairs and nontrivial bijections}
\label{ss:eul-mah}

In \cite{CSZ1997a} a bijection $\Phi$ of $\sym_n$ was given and it was shown that 
$\Phi$ had appeared (somewhat disguised) in several previous papers. The key 
property of $\Phi$ is that it takes $(\des, \dbot, \ddif, \res)$ to 
$(\exc, \ebot, \edif, \ine)$. This then gives access to equidistribution results 
for Euler-Mahonian pairs. The term \emph{Euler-Mahonian} refers in the literature 
to a bistatistic $(e, M)$ such that $e$ is Eulerian, $M$ is Mahonian, and the 
joint distribution of $(e, M)$ is the same as that of another well-known pair 
$(e', M')$. Originally the term was used only for $(e', M') = (\des, \maj)$.  
Other authors, for example \cite{BaSt2000, CSZ1997a} 
allow more possibilities for $(e', M')$, and aim to classify these bistatistics 
up to equidistribution.

In \cite[Table 2]{BaSt2000} seven equivalence classes (under equidistribution)
of Euler-Mahonian pairs $(\des, T)$ were given for $n = 5$ (note that the 
second matrix, corresponding to $\maj$, has an error: in the row indexed by 
$\des = 2$, the entries listed as 14 should be 16). This corresponds to 
14 Mahonian statistics $T$. It is easy to see that $\stat$ does not occur in 
this table, because its maximum value occurs on $\perm_1$ and $\des(\perm_1) = 1$, 
yet none of the seven distributions has a nonzero entry in the $(1, 10)$ position.
We can also check easily that $(\exc, \stat)$ has a different distribution from all the 
entries in the table. Thus if $T'$ is the image of such a $T$ under $\Phi$, 
then $T' \neq \stat$.

\begin{table}
$$
\begin {array}{ccccccccccc} 
1&0&0&0&0&0&0&0&0&0&0\\
0&4&6&8&8&0&0&0&0&0&0\\
0&0&3&7&10&22&15&9&0&0&0\\
0&0&0&0&2&0&5&6&9&4&0\\
0&0&0&0&0&0&0&0&0&0&1
\end {array}
$$
\newline
$$
\begin {array}{ccccccccccc} 
1&0&0&0&0&0&0&0&0&0&0\\
0&4&3&5&3&3&3&2&1&1&1\\
0&0&6&6&13&12&9&9&8&3&0\\
0&0&0&4&3&7&8&4&0&0&0\\
0&0&0&0&1&0&0&0&0&0&0
\end {array}
$$
\label{t:joint}
\caption{Joint distributions $(\exc, \stat)$ and $(\exc, \stati)$ for $n=5$.}
\end{table}

\begin{table}
$$
\begin {array}{ccccccccccc}
1&0&0&0&0&0&0&0&0&0&0\\
0&4&3&5&5&2&3&2&1&0&1\\
0&0&6&8&12&14&11&7&5&3&0\\
0&0&0&2&3&6&5&6&3&1&0\\
0&0&0&0&0&0&1&0&0&0&0
\end {array} 
$$
\newline
$$
\begin {array}{ccccccccccc} 
1&0&0&0&0&0&0&0&0&0&0\\
0&4&3&5&5&2&2&3&1&0&1\\
0&0&6&8&12&15&11&6&5&3&0\\
0&0&0&2&3&5&6&6&3&1&0\\
0&0&0&0&0&0&1&0&0&0&0
\end {array}
$$
\label{t:joint}
\caption{Joint distributions $(\des, \stat)$ and $(\des, \stati)$ for $n=5$.}
\end{table}

We still need to check $\hag$. By direct computation we can show readily that 
\begin{align*}
\Phi(\perm_0) & = \perm_1 \\
\Phi(\perm_1) & = (\perm_1)^{-1} \\
\end{align*}

Suppose that $T = \stat \circ \Phi$ for some $T\in \Sigma$. Then $T(\perm_0) 
= n(n-1)/2$ and $T(\perm_1) = n - 1$. Clearly $\hag$ fails this test. If 
$T\circ \Phi = \stat$ then $T(\perm_1) = \lceil n^2/4 \rceil$ which again $\hag$ 
fails.

\section{An Eulerian partner for $\stat$}

Skandera \cite{Skan2002} gave a general procedure for associating to each Mahonian 
statistic $M$ another statistic $e$ that is Eulerian and such that the pair 
$(e, M)$ is Euler-Mahonian (equidistributed with $(\des, \maj)$). Of course, 
such an Eulerian statistic may not be known or particularly interesting.

Applying this procedure to $\stat$ yields an Eulerian statistic $\ska$. 
Concretely, $\ska(\perm)$ is obtained from the numbers $d_i$ by listing them in 
order, and counting each time we encounter a number larger than the current 
record (the record being initialized to zero). For example, for our running 
example $25413$ we have $d = (0,1,2,3,2)$ and so $\ska$ takes the value $3$. 
Also note that $\ska(\perm_0) = \lceil (n-1)/2 \rceil$ while $\ska(\id) = 0$ 
and $\ska(\perm_1) = n - 1$.

A well-known Eulerian statistic is the number of excedances $\exc$. 
Now $\exc$ agrees with $\ska$ on $\perm_0$ and $\perm_1$. Also,
$\ska$ and $\exc$ are equal when $n = 3$. Nevertheless, $\ska$ is not equal to 
$\exc$ in general, nor does it equal $\des$.

Eulerian statistics in the literature are less commonly found than Mahonian ones. 
As far as I am aware, $\ska$ is itself new, but this is based on much less evidence than 
the corresponding claim about $\stat$.

\section{Further comments} 
\label{sec:extn}

The current paper gives substantial evidence that the statistic $\stat$ is really new.
In order to check thoroughly whether a permutation statistic is new to the literature, one would 
ideally check a database of such statistics. I have not found such a database. I 
propose that as a minimum, tables of values for $n=4$, along with the joint distribution with 
$\exc$ and $\des$ for $n=5$, be included in all papers dealing with this topic, 
to allow easy comparison. It would then be much easier to show that 
the entire group $\Gamma$ generated by $G$ and $\Phi$ does 
not have any element $g$ with $T\circ g = \stat$ for some known Mahonian 
$T$, since all such $T$ of which I am aware are consistent. 

It may be desirable to find a ``static" description of $\stat$ and $\stati$, 
which have been defined ``dynamically". I do not know a systematic way to do this
 (one possible idea is to find linear combinations of the above partial statistics 
 that fit the values for small $n$).  A related question is to determine whether 
$\stat$ can be written as a Mahonian $d$-function for some $d$.

The statistic $\stati$ should extend to words via the selection sort interpretation. 
Whether this statistic is Mahonian on words should be investigated and I intend to do this in 
future work.

Note: As I was preparing this article I was made aware of completely independent recent work by 
T. K. Petersen \cite{Pete2010} that also discusses the statistic $\stat$ and some generalizations. 
The intersection between the topics of these two papers is small, and the reader should consult 
both articles for a fuller picture.

\bibliographystyle{amsalpha}
\bibliography{mahonian}

\end{document}